\renewcommand{\leq}{\leqslant}
\newcommand{\1}[1]{\operatorname{\textbf{1}}}
\newtheorem{theorem}{Theorem}[section]
\newtheorem{lemma}[theorem]{Lemma}
\newtheorem{proposition}[theorem]{Proposition}
\theoremstyle{definition}
\theoremstyle{remark}
\numberwithin{equation}{section}
\def\fnote#1{\footnote}
\def\ignora#1{}
\def\n3#1{\left\vert  \! \left\vert \! \left\vert \, #1 \, \right\vert \!
  \right\vert \! \right\vert }
\newcommand{\iten}{\ensuremath{\widehat{\otimes}_\varepsilon}}
\newcommand{\pten}{\ensuremath{\widehat{\otimes}_\pi}}
\begin{document}

\title{ Superreflexive tensor product spaces }

\author{ Abraham Rueda Zoca }\address{Universidad de Granada, Facultad de Ciencias. Departamento de An\'{a}lisis Matem\'{a}tico, 18071-Granada
(Spain)} \email{ abrahamrueda@ugr.es}
\urladdr{\url{https://arzenglish.wordpress.com}}

\subjclass[2020]{46B08, 46B28, 46M07}

\keywords {Superreflexive; Injective tensor product; Projective tensor product; Ultraproducts}

\maketitle

\markboth{ABRAHAM RUEDA ZOCA}{SUPERREFLEXIVE TENSOR PRODUCT SPACES}

\begin{abstract}
The aim of this note is to prove that, given two superreflexive Banach spaces $X$ and $Y$, then $X\pten Y$ is superreflexive if and only if either $X$ or $Y$ is finite-dimensional. In a similar way, we prove that $X\iten Y$ is superreflexive if and only if either $X$ or $Y$ is finite-dimensional.
\end{abstract}

\section{Introduction}

The study of topological and geometrical properties of Banach spaces has attracted the attention of many researches in Functional Analysis, the papers \cite{amrr22,bu-die,dies,rr24,bu-alt} and the references therein are a good sample of this.

One classical result in tensor product spaces is the well known characterisation of the reflexivity in tensor product spaces \cite[Theorem 4.21]{ryan}. Namely, given two reflexive Banach spaces $X$ and $Y$, one of which has the approximation property. The following are equivalent:
\begin{enumerate}
    \item $X\pten Y$ is reflexive.
    \item $(X\pten Y)^*=X^*\iten Y^*$.
    \item Every operator from $X$ to $Y^*$ is compact.
    \item $X^*\iten Y^*$ is reflexive.
\end{enumerate}

The above characterisation yields examples of non-reflexive tensor product spaces even if both factors are reflexive, e.g. $\ell_2\pten \ell_2$, which even contains $\ell_1$ isometrically \cite[Example 2.10]{ryan}. On the other hand, examples of infinite-dimensional Banach spaces $X$ and $Y$ such that $X\pten Y$ is reflexive can be given. For instance, given $1<p,q<\infty$ with $\frac{1}{p}+\frac{1}{q}<1$, then $\ell_p\pten \ell_q$ is reflexive in virtue of Pitt theorem \cite[Theorem 4.23]{ryan}.

In this note we wonder when a projective tensor product can be superreflexive. As being a condition implying the reflexivity, the above mentioned result \cite[Theorem 4.21]{ryan} will imply that superreflexivity must be an uncommon phenomenon. Going further, a natural question at this point is whether there exists a ``non-trivial'' superreflexive tensor product space, that is, if there are infinite-dimensional Banach spaces $X$ and $Y$ such that $X\pten Y$ is superreflexive.

The main aim of this paper is proving that the answer is no. Indeed, the main theorem of the paper is the following.

\begin{theorem}\label{theo:tensosuperreflexive}
Let $X$ and $Y$ be two superreflexive Banach spaces. The following are equivalent:
\begin{enumerate}
    \item $X\pten Y$ is superreflexive.
    \item Either $X$ or $Y$ is finite dimensional.
\end{enumerate}
\end{theorem}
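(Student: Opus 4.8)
The plan is to prove (2)$\Rightarrow$(1) directly and (1)$\Rightarrow$(2) in contrapositive form, i.e. to show that if \emph{both} $X$ and $Y$ are infinite dimensional then $X\pten Y$ fails to be superreflexive. The implication (2)$\Rightarrow$(1) is routine: if, say, $\dim Y=m<\infty$, then $X\pten Y=X\otimes Y$ is linearly isomorphic to the finite direct sum $X^{m}$ (choose a basis of $Y$; the projective norm is then equivalent to $\sum_j\|x_j\|$ on $X^m$), and a finite product of copies of a superreflexive space is superreflexive. Since superreflexivity is an isomorphic invariant, (1) follows. So the whole content lies in the contrapositive.

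The naive attempt at the hard direction would be to exhibit $\ell_1^n$ uniformly inside $X\pten Y$: using Dvoretzky's theorem pick almost isometric copies of $\ell_2^n$ in $X$ and in $Y$ and try to show the ``diagonal'' elementary tensors $x_i\otimes y_i$ are uniformly equivalent to the $\ell_1^n$ basis. The upper estimate is immediate from the triangle inequality, but the lower estimate requires a norm-one bilinear form on $X\times Y$ with prescribed diagonal values, and producing it forces one to extend the Euclidean identification $E\to\ell_2^n$ boundedly from $E$ to all of $X$. This is exactly a \emph{complementation} requirement on the Dvoretzky subspaces, with constant independent of $n$, and I expect this to be the main obstacle: such uniformly complemented Euclidean subspaces need not exist in an arbitrary (even superreflexive) space. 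The key idea that avoids it entirely is to \emph{pass to the dual} and replace the projective by the injective tensor product. Recall that a Banach space is superreflexive if and only if its dual is; hence it suffices to show that $(X\pten Y)^{*}$ is not superreflexive. Now $(X\pten Y)^{*}$ is the space of bounded bilinear forms on $X\times Y$, and the assignment $\sum_i x_i^{*}\otimes y_i^{*}\mapsto\big((x,y)\mapsto\sum_i x_i^{*}(x)\,y_i^{*}(y)\big)$ identifies $X^{*}\iten Y^{*}$ \emph{isometrically} with a closed subspace of $(X\pten Y)^{*}$, because the injective norm of a finite tensor is precisely its norm as a bilinear form.

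It therefore remains to prove the following injective lemma, which uses no complementation at all and also yields the announced injective version of the theorem: for any infinite dimensional $U,V$, the space $U\iten V$ contains $\ell_\infty^{n}$ almost isometrically for every $n$, so that $c_0$ is finitely representable in it. Indeed, by Dvoretzky's theorem choose $u_1,\dots,u_n\in U$ and $v_1,\dots,v_n\in V$ spanning $(1+\varepsilon)$-copies $E\cong\ell_2^{n}$ and $F\cong\ell_2^{n}$; since the injective norm is \emph{injective}, the natural map $E\iten F\to U\iten V$ is an isometric embedding, so it is enough to compute in $\ell_2^{n}\iten\ell_2^{n}$. That space is isometric to the algebra of $n\times n$ matrices with the operator norm, inside which the diagonal matrices form an isometric copy of $\ell_\infty^{n}$; translating back, the elementary tensors $u_i\otimes v_i$ are $(1+\varepsilon)^2$-equivalent to the $\ell_\infty^{n}$ basis. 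Applying this with $U=X^{*}$ and $V=Y^{*}$ (both infinite dimensional and superreflexive) shows that $c_0$ is finitely representable in $X^{*}\iten Y^{*}$, hence in $(X\pten Y)^{*}$; as $c_0$ is not reflexive, this prevents $(X\pten Y)^{*}$, and therefore $X\pten Y$, from being superreflexive, completing the hard direction. The same lemma applied directly to $U=X,\ V=Y$ gives the injective statement, and the only place superreflexivity of the factors is genuinely used is the easy implication (2)$\Rightarrow$(1).
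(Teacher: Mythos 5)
Your proposal is correct, and it reaches the conclusion by a genuinely different (and more elementary) route than the paper. Both arguments share the same skeleton: reduce to the dual via the isometric embedding $X^{*}\iten Y^{*}\hookrightarrow (X\pten Y)^{*}=L(X,Y^{*})$, invoke Dvoretzky's theorem in $X^{*}$ and $Y^{*}$, and exploit the fact that the injective norm respects subspaces isometrically. The divergence is in how the non-(super)reflexivity is then extracted. The paper passes to an ultrapower: it proves a lemma embedding $X^{*}_{\mathcal U}\iten Y^{*}_{\mathcal U}$ isometrically into $(X^{*}\iten Y^{*})_{\mathcal U}$ (which requires Heinrich's duality $(X_{\mathcal U})^{*}=(X^{*})_{\mathcal U}$, valid precisely because the factors are superreflexive), uses the ultrapower to upgrade finite representability of $\ell_2$ to an isometric copy, and then quotes the non-reflexivity of $\ell_2\iten\ell_2$. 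You instead stay at the finite-dimensional level: identifying $\ell_2^{n}\iten\ell_2^{n}$ with the $n\times n$ matrices under the operator norm and taking the diagonal gives an isometric copy of $\ell_\infty^{n}$, whence $c_0$ is finitely representable in $U\iten V$ for any infinite-dimensional $U,V$; applied to $U=X^{*}$, $V=Y^{*}$ this kills superreflexivity of $(X\pten Y)^{*}$ directly. Your diagonal computation is in effect the finite, quantitative core of the citation ``$\ell_2\iten\ell_2$ is not reflexive'' (i.e.\ $K(\ell_2)\neq L(\ell_2)$). What your route buys: it avoids ultraproducts, the 1-norming proposition, Lemma 3.1 and Heinrich's theorem entirely, and it makes visible that superreflexivity of the factors is used only in the easy implication --- the hard direction holds for arbitrary infinite-dimensional $X$ and $Y$. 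What the paper's route buys is the ultrapower machinery itself (Lemma 3.1), which is of independent interest and is reused verbatim for the injective case; your single lemma also covers the injective theorem, so nothing is lost there either.
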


The above result says that the unique possibility for a projective tensor product to be superreflexive is that one of the factors is finite-dimensional, which establishes a big different with the case of reflexivity.

Our methods, which will be focused on studying ultraproducts of injective tensor product, will allow us to derive a similar version for the case of the injective tensor product.

\begin{theorem}\label{theo:tensoinjesuperreflexive}
Let $X$ and $Y$ be two superreflexive Banach spaces. The following are equivalent:
\begin{enumerate}
    \item $X\iten Y$ is superreflexive.
    \item Either $X$ or $Y$ is finite dimensional.
\end{enumerate}
\end{theorem}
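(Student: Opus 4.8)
The plan is to establish the two implications separately, the entire substance residing in $(1)\Rightarrow(2)$, which I will prove in contrapositive form: \emph{if both $X$ and $Y$ are infinite-dimensional, then $X\iten Y$ is not superreflexive}.

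For the elementary direction $(2)\Rightarrow(1)$, suppose after relabelling that $\dim Y=n<\infty$. Fixing a basis of $Y$ identifies $X\otimes Y$ linearly with $X^{n}$, and since $Y$ is finite-dimensional the injective norm on $X\iten Y$ is equivalent to the norm of $\ell_{\infty}^{n}(X)=X\oplus_{\infty}\cdots\oplus_{\infty}X$. As superreflexivity passes to isomorphic copies and to finite direct sums, $X\iten Y$ is superreflexive whenever $X$ is, which is our hypothesis.

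For the main direction the strategy is to show that $c_{0}$ is finitely representable in $X\iten Y$; since $c_{0}$ is not reflexive and a space is superreflexive exactly when every space finitely representable in it is reflexive, this forces $X\iten Y$ to fail superreflexivity. The engine of the argument is the identity $\ell_{2}^{n}\iten\ell_{2}^{n}=(M_{n},\norm{\cdot}_{\mathrm{op}})$: writing $u=\sum_{i,j}a_{ij}\,e_{i}\otimes e_{j}$, the injective norm of $u$ equals $\sup\{|\sum_{i,j}a_{ij}\xi_{i}\eta_{j}|:\norm{\xi}_{2},\norm{\eta}_{2}\le 1\}$, which is precisely the operator norm of the matrix $(a_{ij})$. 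Inside this space the diagonal tensors satisfy $\norm{\sum_{i}\lambda_{i}\,e_{i}\otimes e_{i}}_{\mathrm{op}}=\max_{i}|\lambda_{i}|$, so they span an isometric copy of $\ell_{\infty}^{n}$. I would then feed in Dvoretzky's theorem: for every $n$ and every $\varepsilon>0$ there are subspaces $E\subseteq X$ and $F\subseteq Y$ that are $(1+\varepsilon)$-isomorphic to $\ell_{2}^{n}$. Because $\iten$ is injective, $E\iten F$ sits isometrically inside $X\iten Y$, and because $\varepsilon$ is a uniform crossnorm the isomorphism $\ell_{2}^{n}\to E$ tensored with $\ell_{2}^{n}\to F$ has norm and inverse norm controlled by the products of the individual distortions. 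Hence $E\iten F$ is $(1+\varepsilon)^{2}$-isomorphic to $\ell_{2}^{n}\iten\ell_{2}^{n}$ and therefore contains $\ell_{\infty}^{n}$ with distortion at most $(1+\varepsilon)^{2}$. Letting $\varepsilon\to 0$ for each fixed $n$ exhibits $\ell_{\infty}^{n}$ almost isometrically in $X\iten Y$ for all $n$, that is, $c_{0}$ is finitely representable there, completing the contrapositive.

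The step I expect to be the crux is the simultaneous control of the injective norm under the two approximate identifications, namely the estimate $\norm{(T\otimes S)^{\pm 1}}\le\norm{T^{\pm 1}}\,\norm{S^{\pm 1}}$ coupled with the isometric embedding property of $\iten$ on subspaces; everything else is soft. In the ultraproduct language favoured in this paper the same mechanism appears as a canonical isometric embedding $X_{\mathcal U}\iten Y_{\mathcal U}\hookrightarrow(X\iten Y)_{\mathcal U}$: granting it, superreflexivity of $X\iten Y$ would make $(X\iten Y)_{\mathcal U}$ reflexive, hence its subspace $X_{\mathcal U}\iten Y_{\mathcal U}$ reflexive; but $X_{\mathcal U}$ and $Y_{\mathcal U}$ contain $\ell_{2}$ isometrically (again by Dvoretzky), so $\ell_{2}\iten\ell_{2}\supseteq c_{0}$ embeds isometrically, contradicting reflexivity. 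Verifying that canonical embedding is the exact analogue of the crossnorm estimate above, and is where the technical care is concentrated.
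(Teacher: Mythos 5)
Your proof is correct, and for the substantive implication $(1)\Rightarrow(2)$ it takes a genuinely different route from the paper. The paper works at the ultraproduct level throughout: its key technical ingredient is Lemma~\ref{lemma:enjaedual}, the isometric embedding $X_\mathcal U\iten Y_\mathcal U\hookrightarrow (X\iten Y)_\mathcal U$ (whose proof uses superreflexivity of $X$ and $Y$ via the identification $(X_\mathcal U)^*=(X^*)_\mathcal U$), after which Dvoretzky plus \cite[Proposition 11.1.12]{alka} puts $\ell_2$ isometrically inside $X_\mathcal U$ and $Y_\mathcal U$, and non-reflexivity of $\ell_2\iten\ell_2$ is quoted from \cite[Theorem 4.21]{ryan}. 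Your primary argument stays entirely at the finite-dimensional level: Dvoretzky gives $(1+\varepsilon)$-copies of $\ell_2^n$ in $X$ and $Y$, the injectivity of the $\varepsilon$-norm and the uniform crossnorm estimate $\Vert T\otimes S\Vert\leq\Vert T\Vert\,\Vert S\Vert$ transport $\ell_2^n\iten\ell_2^n=(M_n,\Vert\cdot\Vert_{\mathrm{op}})$ into $X\iten Y$ with distortion $(1+\varepsilon)^2$, and the diagonal gives $\ell_\infty^n$ isometrically, so $c_0$ is finitely representable in $X\iten Y$. All the individual steps (injectivity on subspaces, the crossnorm inequality, the diagonal computation, the reduction of finite representability of $c_0$ to uniform almost-isometric copies of $\ell_\infty^n$) are standard and correct. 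What your route buys is that it is more elementary (no ultrapowers, no duality of ultrapowers) and strictly more general: it shows that $X\iten Y$ fails superreflexivity for \emph{any} two infinite-dimensional Banach spaces, with no superreflexivity hypothesis needed on the factors, and it reproves rather than cites the non-reflexivity of $\ell_2\iten\ell_2$. What the paper's route buys is uniformity: the same Lemma~\ref{lemma:enjaedual} is the engine for the projective case (Theorem~\ref{theo:tensosuperreflexive}), where one passes to the dual $((X\pten Y)_\mathcal U)^*\supseteq (X^*\iten Y^*)_\mathcal U\supseteq (X^*)_\mathcal U\iten (Y^*)_\mathcal U$, a step with no equally direct local analogue inside $X\pten Y$ itself; your closing paragraph correctly identifies that ultraproduct embedding as the exact analogue of your crossnorm estimate and as the place where the paper concentrates its technical work.
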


After a notation section, which is necessary in order to introduce all the notation about tensor product spaces and ultraproducts, Section~\ref{section:proof} will be devoted to prove the above mentioned theorems.

\section{Notation}

For simplicity we will consider real Banach spaces. We denote by $B_X$ and $S_X$ the closed unit ball and the unit sphere, respectively, of the Banach space $X$. We denote by $L(X, Y)$ the space of all bounded linear operators from $X$ into $Y$. If $Y = \mathbb R$, then $ L(X, \mathbb R)$ is denoted by $X^*$, the topological dual space of $X$. A bounded and symmetric subset $A\subseteq B_{X^*}$ is said to be \textit{1-norming} if $\Vert x\Vert=\sup_{f\in A} f(x)$.

\subsection{Ultrapowers}

Given a sequence of Banach spaces $\{X_n:n\in\mathbb N\}$ we denote 
$$\ell_\infty(\mathbb N,X_n):=\left\{f\colon \mathbb N\longrightarrow \prod\limits_{n\in \mathbb N} X_n: f(n)\in X_n\ \forall n\text{ and }\sup_{n\in \mathbb N}\Vert f(n)\Vert<\infty\right\}.$$
Given a free ultrafilter $\mathcal U$ over $\mathbb N$, consider $c_{0,\mathcal U}(\mathbb N,X_n):=\{f\in \ell_\infty(\mathbb N,X_n): \lim_\mathcal U \Vert f(n)\Vert=0\}$. The \textit{ultrapower of $\{X_n:n\in\mathbb N\}$ with respect to $\mathcal U$} is
the Banach space
$$(X_n)_\mathcal U:=\ell_\infty(\mathbb N,X_n)/c_{0,\mathcal U}(\mathbb N,X_n).$$
We will naturally identify a bounded function $f\colon\mathbb N\longrightarrow \prod\limits_{n\in \mathbb N} X_n$ with the element $(f(n))_{n\in\mathbb N}$. In this way, we denote by $(x_n)_\mathcal U$ or simply by $(x_n)$, if no confusion is possible, the coset in $(X_n)_\mathcal U$ given by $(x_n)_{n\in \mathbb N}+c_{0,\mathcal U}(\mathbb N,(X_n))$.

From the definition of the quotient norm, it is not difficult to prove that $\Vert (x_n)\Vert=\lim_\mathcal U \Vert x_n\Vert$ holds for every $(x_n)\in (X_n)_\mathcal U$. If all the spaces $X_n$ are equal to $X$ we will simply write $X_\mathcal U$.

Observe that, in general, $(X_\mathcal U)^*=(X^*)_\mathcal U$ if and only if $X$ is superreflexive \cite[Theorem 6.4]{hein} (see the next subsection for formal definition). However, it follows that $(X^*)_\mathcal U$ is isometrically a subspace of $(X_\mathcal U)^*$ by the action
$$(x_n^*)(x_n):=\lim_\mathcal U x_n^*(x_n)\ \ (x_n)\in X_\mathcal U, (x_n^*)\in (X^*)_\mathcal U.$$
It follows that $S_{(X^*)_\mathcal U}$ is 1-norming for $X_\mathcal U$. Indeed, we have the following result, which is more general, whose proof is included for convenience and for the sake of completeness.

\begin{proposition}\label{prop:normingultrafilter}
Let $X$ be a Banach space and $\mathcal U$ be a free ultrafilter over $\mathbb N$. Let $A\subseteq B_{X^*}$ which is 1-norming for $X$. Then the set
$$A_\mathcal U:=\{(f_n): f_n\in A\ \forall n\in\mathbb N\}\subseteq (X^*)_\mathcal U$$
is a 1-norming set for $X_\mathcal U$.
\end{proposition}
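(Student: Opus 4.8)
The goal is to show that $A_{\mathcal U}$ is $1$-norming for $X_{\mathcal U}$, i.e.\ that $\norm{(x_n)} = \sup_{(f_n)\in A_{\mathcal U}} (f_n)((x_n))$ for every $(x_n) \in X_{\mathcal U}$. The inequality $\geq$ is automatic: each $(f_n) \in A_{\mathcal U}$ lies in $B_{(X^*)_{\mathcal U}} \subseteq B_{(X_{\mathcal U})^*}$, so it evaluates to at most the norm. The content is the reverse inequality, and the plan is to build, for a fixed $(x_n)$ and a fixed $\varepsilon > 0$, an element $(f_n) \in A_{\mathcal U}$ whose action on $(x_n)$ exceeds $\norm{(x_n)} - \varepsilon$.

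The construction is done coordinatewise and exploits that $A$ is $1$-norming \emph{for $X$} on each fixed coordinate. Concretely, I would fix a representative $(x_n)$ with $\norm{(x_n)} = \lim_{\mathcal U} \norm{x_n}$, and for each $n \in \mathbb{N}$ use the $1$-norming property of $A$ to choose $f_n \in A$ with $f_n(x_n) > \norm{x_n} - \varepsilon$ (this is where the defining equality $\norm{x} = \sup_{f\in A} f(x)$ is applied, once per coordinate). Setting $(f_n) \in A_{\mathcal U}$ with these coordinates, the action formula gives
\[
(f_n)((x_n)) = \lim_{\mathcal U} f_n(x_n) \geq \lim_{\mathcal U}\bigl(\norm{x_n} - \varepsilon\bigr) = \norm{(x_n)} - \varepsilon,
\]
where the middle step uses that $\lim_{\mathcal U}$ respects the pointwise inequality $f_n(x_n) \geq \norm{x_n}-\varepsilon$, and the last equality uses $\norm{(x_n)} = \lim_{\mathcal U}\norm{x_n}$. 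Letting $\varepsilon \to 0$ yields $\sup_{(f_n)\in A_{\mathcal U}}(f_n)((x_n)) \geq \norm{(x_n)}$, completing the argument.

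Two minor points deserve care rather than constituting genuine obstacles. First, one must check that the chosen $(f_n)$ genuinely defines an element of $A_{\mathcal U}$: since $A \subseteq B_{X^*}$ is bounded, the selection $(f_n)$ is a bounded family, so it represents a well-defined coset in $(X^*)_{\mathcal U}$, and by definition it lies in $A_{\mathcal U}$. Second, the action $(f_n)((x_n)) = \lim_{\mathcal U} f_n(x_n)$ is independent of the chosen representatives, which is exactly the well-definedness already recorded in the excerpt when $(X^*)_{\mathcal U}$ was embedded isometrically into $(X_{\mathcal U})^*$. The only step requiring the ultrafilter machinery at all is the passage to the limit along $\mathcal U$, and since ultralimits preserve weak inequalities and compute the ultrapower norm, no real obstacle arises; the proposition is essentially a coordinatewise transfer of the $1$-norming property, and the main thing to get right is simply to perform the selection of $f_n$ uniformly in $n$ before taking the ultralimit.
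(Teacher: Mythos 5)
Your argument is correct and is essentially the paper's own proof: both select, coordinatewise, functionals $f_n\in A$ almost norming $x_n$ and then pass to the ultralimit. The only cosmetic difference is that the paper uses the tolerance $\tfrac{1}{n}$ in place of your fixed $\varepsilon$, which lets a single element of $A_{\mathcal U}$ attain the norm exactly rather than within $\varepsilon$; both versions establish the $1$-norming property.
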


\begin{proof}
Let $(x_n)\in X_\mathcal U$. Select, for every $n\in\mathbb N$, an element $f_n\in A$ such that $f_n(x_n)>\Vert x_n\Vert-\frac{1}{n}$. Now $(f_n)\in A_\mathcal U$, and it is clear that $(f_n)(x_n)=\lim_\mathcal U f_n(x_n)=\lim_\mathcal U \Vert x_n\Vert=\Vert (x_n)\Vert$, as desired.
\end{proof}

\subsection{Superreflexive Banach spaces}

Given two Banach spaces $X$ and $Y$, we say that $Y$ is \textit{finitely representable} in $X$ if, for every finite dimensional subspace $E$ of $Y$ and every $\varepsilon>0$, there exists a finite dimensional subspace $F$ of $X$ and an onto linear mapping $T:E\longrightarrow F$ such that $\Vert T\Vert \Vert T^{-1}\Vert\leq 1+\varepsilon$. 

Recall that $X$ is said to be \textit{superreflexive} if every Banach space $Y$ which is finitely representable in $X$ must be reflexive. We refer the reader to \cite[Chapter 9]{checos} for background. 

It is known that a Banach space $X$ is superreflexive if and only if $X_\mathcal U$ is reflexive for every free ultrafilter $\mathcal U$ over $\mathbb N$ (see the comment after Theorem 1.3.2 in \cite{grelier}).

Observe also that a Banach space $X$ is superreflexive if and only if $X$ admits an equivalent renorming which is simultaneously uniformly convex and uniformly smooth \cite[Theorem 9.18]{checos}. Even though we will not enter in the formal definition of uniformly convex and uniformly smooth Banach spaces, observe that a Banach space $X$ is uniformly convex (respectively uniformly smooth) if and only if $X^*$ is uniformly smooth (repectively uniformly convex) \cite[Theorem 9.10]{checos}.

This result allows us to obtain the following consequence from the above mentioned \cite[Theorem 9.18]{checos}: a Banach space $X$ is superreflexive if and only if $X^*$ is superreflexive.

\subsection{Tensor product spaces}

The projective tensor product of $X$ and $Y$, denoted by $X \pten Y$, is the completion of the algebraic tensor product $X \otimes Y$ endowed with the norm
$$
\|z\|_{\pi} := \inf \left\{ \sum_{n=1}^k \|x_n\| \|y_n\|: z = \sum_{n=1}^k x_n \otimes y_n \right\},$$
where the infimum is taken over all such representations of $z$. The reason for taking completion is that $X\otimes Y$ endowed with the projective norm is complete if and only if either $X$ or $Y$ is finite dimensional (see \cite[P.43, Exercises 2.4 and 2.5]{ryan}).

It is well known that $\|x \otimes y\|_{\pi} = \|x\| \|y\|$ for every $x \in X$, $y \in Y$, and that the closed unit ball of $X \pten Y$ is the closed convex hull of the set $B_X \otimes B_Y = \{ x \otimes y: x \in B_X, y \in B_Y \}$. 

Observe that the action of an operator $G\colon X \longrightarrow Y^*$ as a linear functional on $X \pten Y$ is given by
$$
G \left( \sum_{n=1}^{k} x_n \otimes y_n \right) = \sum_{n=1}^{k} G(x_n)(y_n),$$
for every $\sum_{n=1}^{k} x_n \otimes y_n \in X \otimes Y$. This action establishes a linear isometry from $ L(X,Y^*)$ onto $(X\pten Y)^*$ (see e.g. \cite[Theorem 2.9]{ryan}). All along this paper we will use the isometric identification $(X\pten Y)^*= L(X,Y^*)$ without any explicit mention.

Recall that given two Banach spaces $X$ and $Y$, the
\textit{injective tensor product} of $X$ and $Y$, denoted by
$X \iten Y$, is the completion of $X\otimes Y$ under the norm given by
\begin{equation*}
   \Vert u\Vert_{\varepsilon}:=\sup
   \left\{
      \sum_{i=1}^n \vert x^*(x_i)y^*(y_i)\vert
      : x^*\in S_{X^*}, y^*\in S_{Y^*}
   \right\},
\end{equation*}
where $u=\sum_{i=1}^n x_i\otimes y_i$ (see \cite[Chapter 3]{ryan} for background).
Observe that, from the very definition, the set $S_{X^*}\otimes S_{Y^*}:=\{x^*\otimes y^*: x^*\in S_{X^*}, y^*\in S_{Y^*}\}\subseteq B_{(X\iten Y)^*}$ is a 1-norming subset for $X\iten Y$.

\section{Proof of the results}\label{section:proof}

Observe that, given two Banach spaces $X$ and $Y$, then $X$ and $Y$ can be seen as subspaces of both $X\pten Y$ and $X\iten Y$. Consequently, in order to $X\pten Y$ or $X\iten Y$ be superreflexive then both $X$ and $Y$ must be superreflexive.

Before exhibiting the proof of Theorems~\ref{theo:tensosuperreflexive} and \ref{theo:tensoinjesuperreflexive} we will need the following lemma.

\begin{lemma}\label{lemma:enjaedual}
Let $X$ and $Y$ be two superreflexive Banach spaces and let $\mathcal U$ be a free ultrafilter over $\mathbb N$. Then the mapping $\phi:X_\mathcal U\iten Y_\mathcal U\longrightarrow (X\iten Y)_\mathcal U$ defined by
$$\phi\left(\sum_{i=1}^p (x_n^i)\otimes (y_n^i)\right):=\left(\sum_{i=1}^p x_n^i\otimes y_n^i \right)$$
defines a linear isometry.
\end{lemma}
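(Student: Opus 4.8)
The plan is to verify that $\phi$ is a well-defined linear map on the algebraic tensor product $X_\mathcal U\otimes Y_\mathcal U$, then that it is isometric for the injective norm, and finally to extend it to the completion $X_\mathcal U\iten Y_\mathcal U$ by density. For well-definedness I would start from the map $B\colon X_\mathcal U\times Y_\mathcal U\longrightarrow (X\iten Y)_\mathcal U$ given by $B((x_n),(y_n)):=(x_n\otimes y_n)$. Using the estimate $\Vert x_n\otimes y_n-x_n'\otimes y_n'\Vert_\varepsilon\leq \Vert x_n\Vert\,\Vert y_n-y_n'\Vert+\Vert x_n-x_n'\Vert\,\Vert y_n'\Vert$ together with the boundedness of the representing sequences, the $\lim_\mathcal U$ of the right-hand side vanishes whenever $(x_n)=(x_n')$ and $(y_n)=(y_n')$ in the respective ultrapowers; hence $B$ is independent of representatives and it is plainly bilinear. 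The universal property of the algebraic tensor product then produces a well-defined linear map agreeing with $\phi$ on elementary tensors, which is exactly $\phi$.

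The heart of the matter is the isometric equality $\Vert\phi(w)\Vert=\Vert w\Vert_\varepsilon$ for a fixed $w=\sum_{i=1}^p (x_n^i)\otimes (y_n^i)$ in $X_\mathcal U\otimes Y_\mathcal U$. Writing out both norms, the claim becomes
\begin{equation*}
\lim_\mathcal U\ \sup_{x^*\in S_{X^*},\,y^*\in S_{Y^*}}\Bigl\vert\sum_{i=1}^p x^*(x_n^i)\,y^*(y_n^i)\Bigr\vert=\sup\Bigl\vert\sum_{i=1}^p F((x_n^i))\,G((y_n^i))\Bigr\vert,
\end{equation*}
the supremum on the right running over $F\in S_{(X_\mathcal U)^*}$ and $G\in S_{(Y_\mathcal U)^*}$. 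Here I would use that $X$ and $Y$ are superreflexive, so that $(X_\mathcal U)^*=(X^*)_\mathcal U$ and $(Y_\mathcal U)^*=(Y^*)_\mathcal U$; alternatively, Proposition~\ref{prop:normingultrafilter} shows that $(S_{X^*})_\mathcal U$ and $(S_{Y^*})_\mathcal U$ are $1$-norming, which is all that is needed to compute an injective norm. In either case every admissible $F,G$ may be written as $F=(x_n^*)$, $G=(y_n^*)$ with $x_n^*\in S_{X^*}$, $y_n^*\in S_{Y^*}$ after a harmless normalisation, and then $F((x_n^i))=\lim_\mathcal U x_n^*(x_n^i)$ and $G((y_n^i))=\lim_\mathcal U y_n^*(y_n^i)$.

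For ``$\geq$'' in the display I fix such $F,G$; since the ultralimit is additive and multiplicative on bounded sequences and $p$ is finite, the corresponding right-hand summand equals $\lim_\mathcal U\bigl\vert\sum_i x_n^*(x_n^i)\,y_n^*(y_n^i)\bigr\vert$, which is dominated termwise by the inner supremum and hence is bounded by the left-hand side; taking the supremum over $F,G$ yields the inequality. For ``$\leq$'' I would choose, for each $n$, functionals $x_n^*\in S_{X^*}$ and $y_n^*\in S_{Y^*}$ realising the inner supremum up to an error $\tfrac1n$, and bundle them into $F=(x_n^*)$, $G=(y_n^*)$, which have norm one; since $\lim_\mathcal U\tfrac1n=0$, this choice forces the right-hand supremum to be at least the left-hand side. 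This establishes the isometric equality on $X_\mathcal U\otimes Y_\mathcal U$, and extending $\phi$ by continuity over the dense subspace $X_\mathcal U\otimes Y_\mathcal U$ preserves the isometry, giving the desired map on $X_\mathcal U\iten Y_\mathcal U$.

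I expect the only genuine obstacle to be the identification of the unit sphere of $(X_\mathcal U)^*$ with normalised coordinatewise sequences of functionals on $X$, which is precisely where superreflexivity (equivalently, Proposition~\ref{prop:normingultrafilter}) enters, together with the legitimacy of interchanging the supremum over functionals with the ultralimit; the bundling of near-optimal functionals coordinatewise into a single ultrapower functional is exactly what makes this interchange work, so once it is set up correctly both inequalities are routine.
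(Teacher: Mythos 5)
Your proposal is correct and follows essentially the same route as the paper: both proofs reduce the isometry to comparing $\lim_\mathcal U$ of coordinatewise suprema with the supremum over ultrapower functionals, using superreflexivity (equivalently, Proposition~\ref{prop:normingultrafilter}) to represent functionals on $X_\mathcal U$, $Y_\mathcal U$ and $(X\iten Y)_\mathcal U$ coordinatewise, and bundling near-optimal functionals chosen for each $n$ to get the reverse inequality. Your treatment of well-definedness of $\phi$ via the bilinear map and the universal property is a detail the paper passes over silently, but it does not change the argument.
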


\begin{proof} The linearity is immediate, so let us prove that it is an isometry. Let $z\in X_\mathcal U\iten Y_\mathcal U$ with $z\neq 0$. We can assume up to a density argument that $z=\sum_{i=1}^p (x_n^i)\otimes (y_n^i)$ for certain $(x_n^i)\in X_\mathcal U$ and $(y_n^i)\in Y_\mathcal U$. Let $\varepsilon>0$. By the definition of the injective norm we can find $f\in S_{(X_\mathcal U)^*}$ and $g\in S_{(Y_\mathcal U)^*}$ such that
$$\Vert z\Vert -\varepsilon<(f\otimes g)(z).$$
Since $X$ and $Y$ are superreflexive then $(X_\mathcal U)^*=(X^*)_\mathcal U$ and, similarly, $(Y_\mathcal U)^*=(Y^*)_\mathcal U$. Consequently, $f=(f_n)\in S_{(X^*)_\mathcal U}$ and $g:=(g_n)\in S_{(Y^*)_\mathcal U}$. Now we have
\[
\begin{split}
\Vert z\Vert-\varepsilon<(f_n)\otimes (g_n)(z)& =\sum_{i=1}^p (f_n)(x_n^i) (g_n)(y_n^i)\\
& =\sum_{i=1}^p \lim_{n,\mathcal U} f_n(x_n^i) \lim_{n,\mathcal U} g_n(y_n^i)  \\
& = \sum_{i=1}^p \lim_{n,\mathcal U} f_n(x_n^i)g_n(y_n^i)\\
& = \lim_{n,\mathcal U}\sum_{i=1}^n f_n(x_n^i)g_n(y_n^i)
\end{split}
\]
Now observe that we can consider $(f_n\otimes g_n)\in S_{(X\iten Y)^*_\mathcal U}$. Evaluating the above element at $\phi(z)$ we get
\[
\begin{split}
(f_n\otimes g_n)(\phi(z))& =(f_n\otimes g_n)\left(\sum_{i=1}^p x_n^i\otimes y_n^i \right)\\
& =\lim_{n,\mathcal U} (f_n\otimes g_n)\left(\sum_{i=1}^p x_n^i\otimes y_n^i \right) \\
& =\lim_{n,\mathcal U}\sum_{i=1}^p f_n(x_n^i)g_n(y_n^i)
\end{split}
\]
The above proves that
\[\begin{split}\Vert z\Vert-\varepsilon<(f_n)\otimes (g_n)(z)& =(f_n\otimes g_n)(\phi(z))\\
& \leq \Vert (f_n\otimes g_n)\Vert_{((X\iten Y)_\mathcal U)^*} \Vert \phi(z)\Vert_{(X\iten Y)_\mathcal U}.
\end{split}\]
Since $\Vert (f_n\otimes g_n)\Vert_{((X\iten Y)_\mathcal U)^*}=\lim_{n,\mathcal U} \Vert f_n\otimes g_n\Vert_{(X\iten Y)^*}=1$ we infer $\Vert z\Vert-\varepsilon\leq \Vert \phi(z)\Vert$. The arbitrariness of $\varepsilon>0$ implies $\Vert z\Vert\leq \Vert\phi(z)\Vert$. 

In order to prove that $\Vert \phi(z)\Vert\leq \Vert z\Vert$ let $\varepsilon>0$. Since $S_{X^*}\otimes S_{Y^*}$ is 1-norming for $X\iten Y$ we infer that $(S_{X^*}\otimes S_{Y^*})_\mathcal U$ is 1-norming for $(X\iten Y)_\mathcal U$ in virtue of Proposition~\ref{prop:normingultrafilter}. Consequently, we can find two sequences $(h_n)\subseteq S_{X^*}$ and $(j_n)\subseteq S_{Y^*}$ such that

$$\Vert \phi(z)\Vert-\varepsilon<(h_n\otimes j_n)(\phi(z)).$$
Recalling the definition of $\phi(z)$ we get
\[\begin{split}(h_n\otimes j_n)(\phi(z))& =(h_n\otimes j_n)\left(\sum_{i=1}^p x_n^i\otimes y_n^i \right)\\
& =\lim_\mathcal U (h_n\otimes j_n)\left(\sum_{i=1}^p x_n^i\otimes y_n^i\right)\\
& = \lim_\mathcal U \sum_{i=1}^p h_n(x_n^i)j_n(y_n^i)\\
& = \sum_{i=1}^p  \lim_\mathcal U  h_n(x_n^i)j_n(y_n^i)\end{split}\]
in virtue of the linearity of the limit through $\mathcal U$.

On the other hand, if we see $(h_n)\otimes (j_n)\in (X_\mathcal U\iten Y_\mathcal U)^*$ we get
\[
\begin{split}
(h_n)\otimes (j_n)(z)& =(h_n)\otimes (j_n)\left(\sum_{i=1}^p (x_n^i)\otimes (y_n^i)\right)\\
& =\sum_{i=1}^p (h_n)(x_n^i) (j_n)(y_n^i)\\
& =\sum_{i=1}^p \lim_\mathcal U h_n(x_n^i)j_n(y_n^i)
\end{split}
\]
With all the above we get
\[
\begin{split}
\Vert \phi(z)\Vert-\varepsilon<(h_n\otimes j_n)(\phi(z))=(h_n)\otimes (j_n)(z)\leq \Vert (h_n)\otimes (j_n)\Vert_{(X_\mathcal U\iten Y_\mathcal U)^*}\Vert z\Vert.
\end{split}
\]
Now observe that since $h_n\in S_{X^*}$ it follows that $\Vert (h_n)\Vert_{X^*_\mathcal U}=\lim_\mathcal U\Vert h_n\Vert=1$. Analogously we get $\Vert (j_n)\Vert_{Y_\mathcal U^*}=1$. Hence $\Vert (h_n)\otimes (j_n)\Vert_{(X_\mathcal U\iten Y_\mathcal U)^*}=1$ and we get
$$\Vert \phi(z)\Vert-\varepsilon\leq \Vert z\Vert.$$
The arbitrariness of $\varepsilon>0$ implies $\Vert \phi(z)\Vert=\Vert z\Vert$ and the lemma is finished.\end{proof}

Now we can provide the proof of Theorem~\ref{theo:tensosuperreflexive}.

\begin{proof}[Proof of Theorem~\ref{theo:tensosuperreflexive}]
(2)$\Rightarrow$(1). If the dimension of $X$ is $N$, then $X$ is isomorphic to $\ell_1^N$. Consequently, $X\pten Y$ is isomorphic to $\ell_1^N\pten Y=\ell_1^N(Y)$ \cite[Example 2.6]{ryan}, which is superreflexive since $Y$ is superreflexive.

(1)$\Rightarrow$(2). Assume that both $X$ and $Y$ are infinite dimensional. Take any free ultrafilter $\mathcal U$ over $\mathbb N$, and let us prove that $(X\pten Y)_\mathcal U$ is not reflexive. This is equivalent to proving that its dual $Z:=(X\pten Y)_\mathcal U^*$ is not reflexive. Observe that $(L(X,Y^*))_\mathcal U=((X\pten Y)^*)_\mathcal U$ is isometrically a subspace of $Z$. Consequently, $(X^*\iten Y^*)_\mathcal U$ is an isometric subspace of $Z$. By Lemma~\ref{lemma:enjaedual} we infer that $Z$ contains an isometric copy of $(X^*)_\mathcal U\iten (Y^*)_\mathcal U$. 

Let us prove that $Z$ contains an isometric copy of $\ell_2\iten \ell_2$. Indeed, since $X^*$ is infinite dimensional then $\ell_2$ is finitely representable in $X^*$ by Dvoretzky theorem (c.f. e.g. \cite[Theorem 12.3.6]{alka}). By \cite[Proposition 11.1.12]{alka} we get that $\ell_2$ is an isometric subspace of $X^*_\mathcal U$. Similarly $\ell_2$ is an isometric subspace of $Y^*_\mathcal U$. Consequently, $\ell_2\iten \ell_2$ is isometrically a subspace of $X^*_\mathcal U\iten Y^*_\mathcal U$ since the injective tensor product respects subspaces. Consequently, $\ell_2\iten \ell_2$ is isometrically a subspace of $Z$. 

This implies that $Z$ is not reflexive since $\ell_2\iten \ell_2$ is not reflexive (c.f. e.g. \cite[Theorem 4.21]{ryan}).     
\end{proof}

A similar proof to the above one yields also the proof of Theorem~\ref{theo:tensoinjesuperreflexive}.

\begin{proof}[Proof of Theorem~\ref{theo:tensoinjesuperreflexive}]
(1)$\Rightarrow$(2) follows the same ideas than the corresponding implication in Theorem~\ref{theo:tensosuperreflexive}.

To prove that (2)$\Rightarrow$(1) assume that $X$ is finite-dimensional. Then  $X$ is isomorphic to $\ell_\infty^N$. This implies that $X\iten Y$ is isomorphic to $\ell_\infty^N\iten Y=\ell_\infty^N(Y)$ \cite[Section 3.2]{ryan}, from where the superreflexivity of $X\iten Y$ follows.
\end{proof}

\section*{Acknowledgements}

This work was supported by MCIN/AEI/10.13039/501100011033: grant PID2021-122126NB-C31, Junta de Andaluc\'ia: grant FQM-0185 and by Fundaci\'on S\'eneca: ACyT Regi\'on de Murcia: grant 21955/PI/22.

\end{document}